\documentclass[a4paper,12pt]{amsart}

\usepackage{amssymb,amsmath,amsthm,mathrsfs,xspace, braket}
\usepackage[margin=1.0in]{geometry}
\numberwithin{equation}{section}
\usepackage[colorlinks=true]{hyperref}
\usepackage{aliascnt}
\usepackage{mathtools}
\mathtoolsset{showonlyrefs=true}
\usepackage{enumitem}
\setlist[enumerate]{itemsep=0pt,label=$({\rm\roman*})$,topsep=5pt}
\setlist[itemize]{itemsep=0pt,topsep=5pt,labelindent=\parindent,leftmargin=*}
\setlist[description]{itemsep=0pt,topsep=5pt,leftmargin=*}

\usepackage[all,2cell]{xy}
\objectmargin+{1mm}
\labelmargin+{0.8mm}
\SelectTips{cm}{12}
\newtheorem{thm}{Theorem}[section]

\newaliascnt{cor}{thm}

\aliascntresetthe{cor}

\newaliascnt{lem}{thm}
\newtheorem{lem}[lem]{Lemma}
\aliascntresetthe{lem}

\newaliascnt{prop}{thm}

\aliascntresetthe{prop}

\newaliascnt{conj}{thm}

\aliascntresetthe{conj}

\theoremstyle{definition}
\newaliascnt{dfn}{thm}

\aliascntresetthe{dfn}

\newaliascnt{rem}{thm}
\newtheorem{rem}[rem]{Remark}
\aliascntresetthe{rem}

\newcommand{\Char}{\operatorname{char}}
\newcommand{\Cor}{\operatorname{Cor}}
\newcommand{\Gm}{\mathbb{G}_{m}}
\DeclareMathOperator{\Jac}{Jac}
\newcommand{\Z}{\mathbb{Z}}
\newcommand{\et}{\mathrm{\acute et}}
\newcommand{\HM}{H_{\mathrm M}}
\newcommand{\Het}{H_{\mathrm{\acute et}}}
\newcommand{\cd}{\operatorname{cd}}

\title{Galois Symbols for a Jacobian and Multiplicative Groups}

\author[T. Hiranouchi]{Toshiro Hiranouchi}\address[T. Hiranouchi]{
Department of Basic Sciences, Graduate School of Engineering, 
Kyushu Institute of Technology, 
1-1 Sensui-cho, Tobata-ku, Kitakyushu-shi, 
Fukuoka, 804-8550 JAPAN}
\email{hira@mns.kyutech.ac.jp}

\author[R. Sugiyama]{Rin Sugiyama} \address[R. Sugiyama]{
Department of Mathematics, Physics and Computer Science, 
Japan Women's University, 2-8-1 Mejirodai, Bunkyo-ku, Tokyo, 112-8681 JAPAN
}\email{sugiyamar@fc.jwu.ac.jp}

\keywords{MSC2020: 19E15, 14C25, 14F20, 11G10; Somekawa $K$-groups, Galois symbols, Jacobian varieties, higher Chow groups, motivic cohomology}

\begin{document}
\date{July 13, 2026}

\begin{abstract}
Let $C$ be a smooth projective geometrically connected curve over a
field $k$ with a $k$-rational point. 
Let $J$ be the Jacobian variety of $C$.
For an integer $r\geq 1$ and a positive integer
$n$ prime to the characteristic of $k$, we prove that the Galois symbol map 
\[
 K(k;J,\Gm,\ldots,\Gm)/n
 \to
 \Het^{r+1}\bigl(k,J[n]\otimes\mu_n^{\otimes r}\bigr)
\]
is injective, where the multiplicative group $\Gm$ occurs $r$ times.  
The proof uses Akhtar's description of higher Chow groups of zero-cycles and the
Beilinson--Lichtenbaum theorem. The case $r=1$ recovers a theorem of
Spiess.  
\end{abstract}
\maketitle

\section{Introduction}

Let $G_1,\ldots,G_s$ be semi-abelian varieties over a field $k$.
Somekawa introduced the $K$-group
$K(k;G_1,\ldots,G_s)$, now called the Somekawa $K$-group,
by generators over finite extensions of $k$ and by projection-formula
and reciprocity relations \cite{Som90}.  If $n$ is prime to
the characteristic $\Char(k)$ of $k$, 
he also defined the Galois symbol map 
\begin{equation}\label{eq:gen}
 S_n(k;G_1,\ldots,G_s)\colon
 K(k;G_1,\ldots,G_s)/n
 \to
 \Het^s\bigl(k,G_1[n]\otimes\cdots\otimes G_s[n]\bigr);
\end{equation}
see \cite[Lemma 1.6]{Som90}. Somekawa conjectured that this map is
injective \cite[Remark 1.7]{Som90}.  The conjecture is not true for
all semi-abelian varieties; see \cite{SY09}.  It is therefore useful
to find natural classes for which the symbol is injective.

Let $C$ be a smooth projective geometrically connected curve over $k$
with a $k$-rational point, and let $J=\Jac_C$ be its Jacobian.  Spiess
proved that
\[
S_n(k;\Gm,J)\colon K(k;\Gm,J)/n
 \to \Het^2(k,\mu_n\otimes J[n])
\]
is injective (\cite[Appendix, Theorem~6.1]{Yam05}).  His proof uses
the Bloch--Ogus theory and the Merkurjev--Suslin theorem.  The purpose of this note is to extend this result to an arbitrary
number of copies of $\Gm$.

\begin{thm}\label{thm:main}
Let $k$ be a field, and let $C$ be a projective smooth geometrically connected curve over $k$ with a $k$-rational point.  Let $J$ be the
Jacobian variety of $C$.  Let $r\geq 1$, and let $n$ be a positive
integer prime to the characteristic $\Char(k)$ of $k$.  Then the Galois symbol map
\begin{equation}\label{eq:main}
 S_n^{J,r}\colon
 K(k;J,\underbrace{\Gm,\ldots,\Gm}_{r})/n
 \to
 \Het^{r+1}\bigl(k,J[n]\otimes\mu_n^{\otimes r}\bigr)
\end{equation}
is injective.
\end{thm}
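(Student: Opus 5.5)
We reduce the statement to a comparison between higher Chow groups of zero-cycles on $C$ and étale cohomology, where the Beilinson--Lichtenbaum theorem does the work.

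\emph{Step 1: Somekawa $K$-groups as higher Chow groups.}
Since $C$ has a $k$-rational point $x_0$, the Picard group splits as $\mathrm{Pic}(C)=\Z\oplus J(k)$. Correspondingly, the Somekawa $K$-group with one copy of $J_C$ and $r$ copies of $\Gm$ splits as
\[
K(k;\Z\oplus J,\Gm,\ldots,\Gm)=K(k;\Gm,\ldots,\Gm)\oplus K(k;J,\Gm,\ldots,\Gm).
\]
By Akhtar's description of higher Chow groups of zero-cycles, there is an isomorphism
\[
K(k;\underline{\mathrm{Pic}}_C,\Gm,\ldots,\Gm)\;\cong\; CH^{r+1}(C,r)=\HM^{r+2}(C,\Z(r+1)).
\]
The first summand is $K(k;\Gm^{\times r})\cong K^M_r(k)\cong CH^{r}(k,r)$, by Somekawa and Nesterenko--Suslin--Totaro. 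It is identified with the summand split off by pushforward along the structure map $C\to\operatorname{Spec}k$ and pullback along $x_0$.

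Hence $K(k;J,\Gm,\ldots,\Gm)$ is identified with the reduced group
\[
\widetilde{CH}^{r+1}(C,r)=\ker\bigl(CH^{r+1}(C,r)\to CH^r(k,r)\bigr).
\]

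\emph{Step 2: The cycle map mod $n$.}
Consider the étale cycle class map
\[
\HM^{r+2}(C,\Z/n(r+1))\to \Het^{r+2}(C,\mu_n^{\otimes r+1}).
\]
The Beilinson--Lichtenbaum theorem, now a consequence of Bloch--Kato (Voevodsky--Rost), says that $\HM^i(X,\Z/n(j))\to\Het^i(X,\mu_n^{\otimes j})$ is bijective for $i\le j$ and injective for $i=j+1$. Here $i=r+2=j+1$, so the map is injective.

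Moreover $\HM^{r+3}(C,\Z(r+1))=0$ for dimension reasons: it is a Chow group of negative index. So $CH^{r+1}(C,r)/n$ embeds into $\HM^{r+2}(C,\Z/n(r+1))$ via the universal coefficient sequence. Combining the two facts, $CH^{r+1}(C,r)/n\hookrightarrow \Het^{r+2}(C,\mu_n^{\otimes r+1})$.

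\emph{Step 3: Hochschild--Serre and compatibility.}
Decompose $\Het^{r+2}(C,\mu_n^{\otimes r+1})$ using the Hochschild--Serre spectral sequence, split by $x_0$. The pushforward to $k$ removes the $E_2^{r,2}=\Het^r(k,\mu_n^{\otimes r})$ contribution, since $H^2(\bar C,\mu_n^{\otimes r+1})\cong\mu_n^{\otimes r}$. Pullback along $x_0$ kills $\Het^{r+2}(k,\mu_n^{\otimes r+1})$.

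What remains is $E_2^{r+1,1}=\Het^{r+1}(k,H^1(\bar C,\mu_n^{\otimes r+1}))=\Het^{r+1}(k,J[n]\otimes\mu_n^{\otimes r})$. Here we use $H^1(\bar C,\mu_n)\cong J[n]$. The spectral sequence degenerates compatibly with these splittings by the standard argument for a curve with a rational point: the sections give a decomposition of $R\pi_*\mu_n$ in the derived category.

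One then checks that the composite
\[
K(k;J,\Gm,\ldots,\Gm)/n\to \Het^{r+1}(k,J[n]\otimes\mu_n^{\otimes r})
\]
coincides, up to sign, with $S_n^{J,r}$. This is verified on generators $\{a,b_1,\dots,b_r\}_{K/k}$. By the norm compatibility of both sides, it reduces to $K=k$. There, with $a\in J(k)$ represented by a divisor, it amounts to compatibility of the cycle class with cup products and with the Kummer map of $J$. The latter is the $r=0$ case, i.e.\ the Kummer class of $\mathrm{Pic}^0$.

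\emph{Main obstacle.}
We must also know that the reduced part of $CH^{r+1}(C,r)/n$ maps injectively to the reduced part of étale cohomology, i.e.\ that the splittings match on both sides. This follows from functoriality of the cycle map under $\pi_*$ and $x_0^*$.

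The main obstacle is therefore the compatibility check of Step 3: identifying Akhtar's isomorphism composed with the cycle class map with Somekawa's Galois symbol, including the treatment of norms from finite extensions. We would handle it by using the projection formula for the cycle class map under finite pushforward $C_K\to C$.
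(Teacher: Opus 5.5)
Your proposal is correct and is essentially the paper's argument: Akhtar's isomorphism with the rational-point splitting, injectivity of the mod-$n$ cycle map in bidegree $(r+2,r+1)$ from Beilinson--Lichtenbaum, the Hochschild--Serre identification of the reduced \'etale part with $\Het^{r+1}(k,J[n]\otimes\mu_n^{\otimes r})$, and a check on generators reduced to $E=k$ via push-forward/corestriction, the Kummer map of $J$, and compatibility with cup products. The differences are cosmetic: you split using $\pi_*$ and $x_0$ directly rather than the projectors $[P\times C]$ and $[C\times P]$ (on the Chow side the section is $x_{0*}$, since $x_0^*$ lands in $CH^{r+1}(k,r)=0$), and your ``up to sign'' is the paper's explicit $(-1)^r$, which comes from the multiplicative structure of the Hochschild--Serre spectral sequence.
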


The permutation of factors sends
$\set{z,a_1,\ldots,a_r}_{E/k}$ to
$\set{a_1,\ldots,a_r,z}_{E/k}$.  After the corresponding permutation
of the coefficient modules, the two Galois symbols differ by the sign
$(-1)^r$.  Hence the theorem is equivalent to the injectivity of
\[
 K(k;\underbrace{\Gm,\ldots,\Gm}_{r},J)/n
 \to 
 \Het^{r+1}\bigl(k,\mu_n^{\otimes r}\otimes J[n]\bigr).
\]
For $r=1$, after permuting the two factors,
\autoref{thm:main} gives another proof of Spiess's theorem \cite[Appendix, Theorem 6.1]{Yam05}.  
The proof reduces to two known results.  First, Akhtar proved that
a mixed $K$-group attached to $CH_0(C)$ and $r$ copies of
$\Gm$ is naturally isomorphic to the higher Chow group $CH^{r+1}(C,r)$ 
(\cite[Theorem~6.1]{Akh04}).  A rational point gives a decomposition of
this mixed $K$-group into
\[
 K_r^M(k)\oplus K(k;J,\Gm,\ldots,\Gm).
\]
Second, the Beilinson--Lichtenbaum comparison theorem implies that the
finite coefficient \'etale cycle map in bidegree $(r+2,r+1)$ is
injective (\cite[Corollary~1.2]{GL01}, \cite[Theorem~6.17]{Voe11}). 

\subsection*{Notation} 
Throughout this paper, a curve over a field means an integral
separated scheme of finite type over that field of dimension one.

\subsection*{Acknowledgements} 
The first author was supported by JSPS KAKENHI Grant Number 24K06672.
The second author was supported by JSPS KAKENHI Grant Number 21K03188.

\section{Preliminaries}

Let $C$ be a smooth projective geometrically connected curve over $k$ with $P\in C(k)$, and $J = \mathrm{Jac}_C$ be the Jacobian of $C$.

\subsection{Mixed \texorpdfstring{$K$}{K}-groups and higher Chow groups}
For every field extension $E/k$, we write $P_E$ for the base change of $P$.
In \cite{Akh04}, Akhtar defined the mixed $K$-group
\[
 K_r\bigl(k;CH_0(C),\Gm\bigr)
 :=K\bigl(k;CH_0(C),\underbrace{\Gm,\ldots,\Gm}_{r}\bigr).
\]
It is generated by symbols
$\set{z,a_1,\ldots,a_r}_{E/k}$, where $E/k$ is finite,
$z\in CH_0(C_E)$, and $a_i\in E^\times$.  The defining relations are
the projection-formula and reciprocity relations; see the definition of
the mixed $K$-group in \cite[Section~3]{Akh04}. 
We also define 
\[
K_r(k;J,\Gm) :=  K(k;J,\underbrace{\Gm,\ldots,\Gm}_{r}).
\]
Let $n$ be an integer prime to $\Char(k)$.  For a finite extension $E/k$, let
\[
 \delta_E\colon E^\times/(E^\times)^n\to \Het^1(E,\mu_n)
\]
and
\[
 \delta_{J,E}\colon J(E)/nJ(E)\to \Het^1(E,J[n])
\]
be the Kummer connecting homomorphisms.  The Galois symbol map
\[
 S_n^{J,r}\colon K_r(k;J,\Gm)/n
 \to \Het^{r+1}\bigl(k,J[n]\otimes\mu_n^{\otimes r}\bigr)
\]
is induced on symbols by
\begin{equation}\label{eq:GS}
 S_n^{J,r}\bigl(\set{z,a_1,\ldots,a_r}_{E/k}\bigr)
 =\Cor_{E/k}\bigl(
 \delta_{J,E}(z)\cup\delta_E(a_1)\cup\cdots\cup\delta_E(a_r)
 \bigr);
\end{equation}
see \cite[Lemma~1.6]{Som90}.
The point $P$ gives, for every finite extension $E/k$, the decomposition
\[
 CH_0(C_E)=\Z[P_E]\oplus CH_0(C_E)^0
          =\Z[P_E]\oplus J(E).
\]
By \cite[Corollary~5.4.5]{Akh00} and
\cite[Proposition~1.5]{Som90}, it induces a natural decomposition
\begin{equation}\label{eq:split}
 K_r\bigl(k;CH_0(C),\Gm\bigr)
 \simeq
 K_r^M(k)\oplus
 K_r(k;J,\Gm).
\end{equation}
We use cohomological notation for higher Chow groups.  By the
Nesterenko--Suslin--Totaro theorem, for every field $E$ there is a
natural isomorphism $K_r^M(E)\simeq CH^r(E,r)$ (\cite{NS90,Tot92}).  
We use this isomorphism to identify these two
groups.  Thus the notation $\set{a_1,\ldots,a_r}$ denotes both the
Milnor symbol and the corresponding class in $CH^r(E,r)$.  Since $C$
has dimension one, the group of zero-cycles in degree $r$ is
$CH^{r+1}(C,r)$.

\begin{thm}[{\cite[Theorem~6.1]{Akh04}}]\label{thm:Akh}
For every $r\geq0$, there is a natural isomorphism
\begin{equation}\label{eq:Akh}
 \varphi_r\colon
 CH^{r+1}(C,r)
 \overset{\sim}{\to}
 K_r\bigl(k;CH_0(C),\Gm\bigr).
\end{equation}
\end{thm}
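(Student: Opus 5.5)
The plan is to write down maps in both directions and check that they are mutually inverse. The main input is the Gersten-type presentation of higher Chow groups of zero-cycles on the curve $C$. By the localization sequence for $C$ and its closed points, together with the Nesterenko--Suslin--Totaro isomorphism, I expect a presentation
\[
 \bigoplus_{\eta} K_{r+1}^M(k(C)) \xrightarrow{\ \partial\ } \bigoplus_{x\in C_{(0)}} K_r^M(k(x)) \to CH^{r+1}(C,r)\to 0 ,
\]
where $\partial$ is the sum of the tame symbols at the closed points. The same exact sequence also holds after any finite base change $E/k$. So $CH^{r+1}(C,r)$ is generated by the classes $i_{x*}\set{a_1,\ldots,a_r}$ with $x$ a closed point, and the only relations are the tame symbols of elements of $K^M_{r+1}(k(C))$.

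\textbf{Step 1: the map $\psi_r\colon K_r(k;CH_0(C),\Gm)\to CH^{r+1}(C,r)$.} On symbols, set
\[
 \psi_r\bigl(\set{z,a_1,\ldots,a_r}_{E/k}\bigr)=\pi_{E/k*}\bigl(z\times\set{a_1,\ldots,a_r}\bigr),
\]
where $z\times\set{a_1,\ldots,a_r}\in CH^{r+1}(C_E,r)$ is the exterior product of $z\in CH^1(C_E)$ with $\set{a_1,\ldots,a_r}\in CH^r(E,r)$, and $\pi_{E/k}\colon C_E\to C$ is the projection.

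Multilinearity is clear. The projection-formula relations follow from the projection formula for higher Chow groups under finite pushforward $C_{E'}\to C_E$. The reciprocity relation is the delicate point. It involves a smooth projective curve $X$ over $E$, a function $f\in E(X)^\times$, and elements $g_i$, and it says that a sum over closed points of norms of local symbols vanishes. I would verify it by realising that sum as the boundary of an element of $CH^{r+2}(C\times X,r+1)$ at the generic point of $X$, pushed forward to $C$. This is the Weil/Suslin reciprocity statement that the composite of boundary and pushforward on $C\times X$ is zero.

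\textbf{Step 2: the inverse $\varphi_r$.} Send $\set{a_1,\ldots,a_r}\in K_r^M(k(x))$ to $\set{[\tilde x],a_1,\ldots,a_r}_{k(x)/k}$, where $\tilde x\in C_{k(x)}(k(x))$ is the canonical rational point lying over $x$. To see that $\varphi_r$ factors through $CH^{r+1}(C,r)$, I must show that the image of $\partial(\alpha)$ vanishes for every $\alpha\in K^M_{r+1}(k(C))$. Write $\alpha$ as a sum of symbols $\set{f,g_1,\ldots,g_r}$ with $f,g_i\in k(C)^\times$. Apply the reciprocity relation of the mixed $K$-group to the curve $X=C_{k(C)}$'s base, i.e.\ to $C$ itself, taking as $CH_0$-entry the generic point $\eta$ regarded as a zero-cycle of degree one over $k(C)$. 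Its specialization at a closed point $x$ is $[\tilde x]$, so the local symbols at $x$ reproduce $\set{[\tilde x],\partial_x\set{f,g_1,\ldots,g_r}}_{k(x)/k}$.

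Matching the local symbols of the mixed group with the Milnor tame symbol, including the signs and the Steinberg relation, is the step I expect to be the main obstacle. I would first reduce to the case where $f$ is a uniformiser at $x$ and the $g_i$ are units, using multilinearity; then I would compare directly with Somekawa's definition of the local symbol.

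\textbf{Step 3: the composites are identities.} The identity $\psi_r\circ\varphi_r=\mathrm{id}$ holds on the generators $i_{x*}\set{a}$, since $\pi_{k(x)/k*}[\tilde x]=[x]$. For $\varphi_r\circ\psi_r=\mathrm{id}$, take a symbol $\set{z,a_1,\ldots,a_r}_{E/k}$. By multilinearity I may assume $z=[y]$ for a closed point $y\in C_E$. The projection-formula relation then rewrites the symbol as $\set{[\tilde y],N(a_1),\ldots}_{E(y)/k}$ in the appropriate sense. This agrees with $\varphi_r$ applied to the pushforward, after one further application of the projection formula along $E(y)\supset k(x)$, where $x$ is the image of $y$.

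Naturality in $k$ and $C$ follows from the symbol formulas. It is the compatibility of both maps with pushforward and with base change.
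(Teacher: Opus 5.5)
The paper gives no proof of this statement; it cites Akhtar. Your $\psi_r$ is exactly the formula the paper records for $\varphi_r^{-1}$, and your Step 0 presentation and Step 1 are sound. Step 1 in fact establishes full Weil--Suslin reciprocity on the Chow side. The trouble is in Steps 2 and 3 when $r\ge 2$.

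\textbf{Step 2.} In a Somekawa-type group such as $K(k;CH_0(C),\Gm,\dots,\Gm)$, the reciprocity relation is imposed only for admissible data. These are tuples $(z,g_1,\dots,g_r,f)$ such that at every place $v$ at most one $g_i$ fails to lie in $\mathcal{O}_v^\times$. (The $CH_0$-entry always specializes, since $C$ is proper.) A general element of $K^M_{r+1}(k(C))$ is not a sum of admissible symbols in any evident way. For example, when $r=2$, a symbol $\{a,b,c\}$ with $a,b,c$ all vanishing at one point cannot be arranged into admissible form, so you cannot just apply the relation to it. Your proposed reduction by multilinearity to "$f$ a uniformiser at $x$, $g_i$ units" does not help. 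It only identifies the local term at $x$ once the relation is already applicable, and the relation is a single global identity over all closed points; rewriting $\alpha$ to suit one point $x$ generally breaks admissibility at other points. What is missing is a proof that $\sum_x\{[\tilde x],\partial_x\alpha\}_{k(x)/k}=0$ for every $\alpha\in K^M_{r+1}(k(C))$. In other words, you must show that the restricted relations force Suslin-type reciprocity when the $CH_0$-entry is the non-constant point $[\eta]$. This is precisely the injectivity of $\psi_r$, which is the real content of Akhtar's theorem, and it needs a genuine argument: either a generation statement for $K^M_{r+1}$ by admissible symbols, or a comparison of the restricted relations with unrestricted ones. For $r\le 1$ the issue does not arise, and your sketch is essentially complete.

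\textbf{Step 3.} Here too there is a gap for $r\ge2$. The projection formula turns $\{[y],a_1,\dots,a_r\}_{E/k}$ into $\{[\tilde y],a_1,\dots,a_r\}_{E(y)/k}$ with the $a_i$ restricted to $E(y)$, not normed. Its image under $\psi_r$ is $i_{x*}N_{E(y)/k(x)}\{a_1,\dots,a_r\}$, where $N$ is the Bass--Tate--Kato norm, and this is not a symbol in general. The projection-formula relation allows only one non-restricted entry, so "one further application" cannot descend from $E(y)$ to $k(x)$ unless all but one $a_i$ already lie in $k(x)$. What you need is the identity $\{\mathrm{res}_{L/F}w,\beta\}_{L/k}=\{w,N_{L/F}\beta\}_{F/k}$ for all $\beta\in K^M_r(L)$. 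You also need the Steinberg relation, which you already require just to define $\varphi_r$ on $K^M_r(k(x))$. Both can be obtained as follows:
\begin{enumerate}
\item Check that $\{b_1,\dots,b_r\}_{L/F}\mapsto\{\mathrm{res}_{L/F}w,b_1,\dots,b_r\}_{L/k}$ respects Somekawa's relations: an admissible reciprocity relation over $F$ becomes an admissible one in the mixed group with constant $CH_0$-entry.
\item Then invoke Somekawa's isomorphism $K(F;\Gm,\dots,\Gm)\simeq K^M_r(F)$.
\end{enumerate}
This trick does not rescue Step 2, however, because there the $CH_0$-entry $[\eta]$ specializes to a different point at each $x$.
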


We describe the inverse map of $\varphi_r$ in \eqref{eq:Akh}.
Let $E/k$ be finite and let $p_E\colon C_E\to C$ be the projection.
Via the canonical identification
\[
 E^\times\simeq CH^1(E,1),
\]
we regard each $a_i\in E^\times$ as its pull-back to
$CH^1(C_E,1)$ along the structure morphism of $C_E$.  The inverse map
$\varphi_r^{-1}$ is characterized by
\begin{equation}\label{eq:Akh-sym}
 \varphi_r^{-1}\bigl(\set{z,a_1,\ldots,a_r}_{E/k}\bigr)
 = (p_E)_*\bigl(z\cap a_1\cap\cdots\cap a_r\bigr)
\end{equation}
for $z\in CH_0(C_E)=CH^1(C_E)$ and $a_i\in E^\times$.  Here $\cap$
denotes the intersection product on higher Chow groups
\[
 \cap\colon CH^p(C_E,m)\times CH^q(C_E,n)
 \to CH^{p+q}(C_E,m+n),
\]
and
\[
 (p_E)_*\colon CH^{r+1}(C_E,r)\to CH^{r+1}(C,r)
\]
is the proper push-forward.  Thus
$z\cap a_1\cap\cdots\cap a_r\in CH^{r+1}(C_E,r)$.

Combining \eqref{eq:split} and \autoref{thm:Akh}, we obtain
\begin{equation}\label{eq:CH}
 CH^{r+1}(C,r)
 \simeq
 K_r^M(k)\oplus
 K_r(k;J,\Gm).
\end{equation}
Let $\iota_{J,r}$ denote the inclusion of the second factor in
\eqref{eq:CH}.  By \eqref{eq:Akh-sym},
\begin{equation}\label{eq:iota}
 \iota_{J,r}\bigl(\set{z,a_1,\ldots,a_r}_{E/k}\bigr)
 = (p_E)_*\bigl(z\cap a_1\cap\cdots\cap a_r\bigr)
\end{equation}
for $z\in J(E)=CH_0(C_E)^0$.
We will see the decomposition \eqref{eq:CH} via a decomposition of the Chow motive of $C$ (\autoref{lem:decomp_CH}).

\subsection{Chow motives and the action of correspondences}
We briefly recall effective Chow motives.
An effective Chow motive is a pair $M=(X,\pi)$ where $X$ is a smooth projective equidimensional variety over $k$ of $\dim X=d$ and $\pi$ is a projector in the ring $CH^{d}(X\times X)$ of correspondences on $X$.
We write $h(X)$ for the Chow motive $(X,1_X)$ where $1_X$ is the graph of the identity $\mathrm{id}_X$.

A correspondence $\pi\in CH^d(X\times X)$ induces an endomorphism $\pi_\ast$ of $CH^i(X,j)$ as follows.
For any $z\in CH^i(X,j)$, 
$$
\pi_\ast(z):={p_2}_\ast(\pi \cap p_1^\ast(z))\in CH^i(X,j)
$$
where $p_i:X\times X\to X$ is the $i$-th projection.
Note that ${1_X}_\ast$ is the identity.
For an effective Chow motive $M=(X,\pi)$, we define 
$
CH^i(M,j):=\pi_\ast CH^i(X,j)$.
Then we have a decomposition of $CH^i(X,j)$;
$$
CH^i(X,j)=CH^i(M,j)\oplus CH^i(M',j)
$$
where $M'=(X,1_X-\pi)$.

A $k$-rational point $P\in X(k)$ gives the following decomposition of $h(X)$;
$$
h(X)=h^0(X) \oplus h^{[1,2d-1]}(X) \oplus h^{2d}(X) 
$$
where 
\begin{align*}
&h^0(X)=(X, [P\times X]),\\
&h^{[1,2d-1]}(X)=(X, 1_X-[P\times X]-[X\times P])\\
&h^{2d}(X)=(X, [X\times P]).
\end{align*}
There are isomorphisms of Chow motives
$$
h^0(X)\simeq (\operatorname{Spec}(k), 1_{\operatorname{Spec}(k)}),\ h^{2d}(X)\simeq \mathbb{L}^{d}
$$
where $\mathbb{L}$ is the Lefschetz motive.
If $\dim X=1$, we write $h^1(X)$ for $h^{[1,2d-1]}(X)$, and in this case we have the following decomposition of $CH^i(X,j)$;
\begin{align}
    CH^i(X,j)=CH^i(h^0(X),j)\oplus CH^i(h^1(X),j)\oplus CH^i(h^2(X),j).
\end{align}

\begin{lem}\label{lem:decomp_CH}
	For any $r\ge 0$, the isomorphism \eqref{eq:Akh} induces isomorphisms 
    \begin{align}
    &CH^{r+1}(h^0(C),r)=0,\\
    &CH^{r+1}(h^1(C),r)\simeq K_r(k;J,\Gm),\ \text{and}\\
    &CH^{r+1}(h^2(C),r)\simeq K_r^M(k).
    \end{align}
\end{lem}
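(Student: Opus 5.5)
We compute the three projectors $\pi_0=[P\times C]$, $\pi_2=[C\times P]$ and $\pi_1=1_C-\pi_0-\pi_2$ on $CH^{r+1}(C,r)$, and then transport the result through $\varphi_r$. Here $C$ has dimension $d=1$. By \autoref{thm:Akh} every class in $CH^{r+1}(C,r)$ is a sum of elements $(p_E)_*(z\cap a_1\cap\cdots\cap a_r)$ with $z\in CH_0(C_E)$ and $a_i\in E^\times$. Correspondences commute with proper push-forward along $p_E$, so we may work over $E$ with $\pi$ replaced by its base change. Thus it suffices to compute $(\pi_i)_*(z\cap a)$ on $C_E$, where $a=a_1\cap\cdots\cap a_r$ is pulled back from $CH^r(E,r)$.

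\textbf{The projector $\pi_0$.} We have $(\pi_0)_*(w)={p_2}_*\bigl([P\times C]\cap p_1^*w\bigr)$. The cycle $[P\times C]$ is the push-forward of the fundamental class along $P\times C\hookrightarrow C\times C$. By the projection formula, $(\pi_0)_*(w)=q^*\bigl(i_P^*w\bigr)$, where $i_P^*w\in CH^{r+1}(\operatorname{Spec}E,r)$ and $q\colon C_E\to\operatorname{Spec}E$ is the structure map. Since $CH^{r+1}(F,r)=0$ for any field $F$ (codimension exceeds the cubical dimension), this vanishes. Hence $CH^{r+1}(h^0(C),r)=0$.

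\textbf{The projector $\pi_2$.} Similarly, $(\pi_2)_*(w)=i_{P*}\bigl(q_*w\bigr)$, with $q_*w\in CH^r(E,r)\simeq K^M_r(E)$. For $w=z\cap q^*a$, the projection formula gives $q_*w=\deg(z)\cdot a$. So $(\pi_2)_*(z\cap a)=\deg(z)\,[P_E]\cap a$. After applying $(p_E)_*$, this is the image of $N_{E/k}(\deg(z)\,a)\in K_r^M(k)$ under $b\mapsto [P]\cap b$. This map is injective, since composing with $q_*$ recovers $b$. Therefore $CH^{r+1}(h^2(C),r)$ is exactly the image of $K_r^M(k)$, namely $\varphi_r^{-1}$ of the symbols $\{m[P_E],a_1,\ldots,a_r\}_{E/k}$. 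These form the first summand in \eqref{eq:split}, which is identified with $K_r^M(k)$ via the norm/projection formula of \cite[Proposition~1.5]{Som90}.

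\textbf{The projector $\pi_1$.} From the two computations, $(\pi_1)_*(z\cap a)=(z-\deg(z)[P_E])\cap a$. This is exactly $\iota_{J,r}$ applied to the symbol $\{z-\deg(z)P_E,a_1,\ldots,a_r\}_{E/k}$ with $z-\deg(z)P_E\in J(E)$. So $\pi_{1*}$ corresponds under $\varphi_r$ to the projection onto the second summand of \eqref{eq:split}. Hence $CH^{r+1}(h^1(C),r)=\operatorname{im}\iota_{J,r}\simeq K_r(k;J,\Gm)$, using the injectivity of $\iota_{J,r}$ that follows from \eqref{eq:CH}.

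\textbf{Main obstacle.} The delicate step is justifying the formulas for $(\pi_i)_*$. This requires compatibility of the correspondence action with base change to $E$ and with $(p_E)_*$. It also requires the projection formula and the identity $i_P^*(z\cap q^*a)=\deg(z|_P)\ldots$ in higher Chow groups; more precisely, we only need that $i_P^*$ lands in the zero group $CH^{r+1}(E,r)$. We would first reduce the $\pi_0$ and $\pi_2$ computations to the formulas $(\pi_0)_*=q^*i_P^*$ and $(\pi_2)_*=i_{P*}q_*$. These hold by standard functoriality, writing each product cycle as the exterior product of two factors.
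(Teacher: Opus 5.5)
Your proposal is correct and follows essentially the same route as the paper: both compute the actions of $[P\times C]$ and $[C\times P]$ on Akhtar's generators $(p_E)_*(z\cap a_1\cap\cdots\cap a_r)$, after reducing to $E$ by base change, and obtain $0$ and $\deg(z)\,[P]\cap a_1\cap\cdots\cap a_r$ respectively. Your factorizations $(\pi_0)_*=q^*i_P^*$ and $(\pi_2)_*=i_{P*}q_*$ are a slightly cleaner packaging: the vanishing on $h^0$ then follows from $CH^{r+1}(E,r)=0$ on all of $CH^{r+1}(C,r)$, whereas the paper uses $[P\times C]\cap p_1^*z=0$. In addition, your explicit identification of $(\pi_1)_*$ with the projection onto the $J$-summand of \eqref{eq:split} makes the paper's final kernel comparison precise.
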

\begin{proof}
    Put $\alpha:=[P\times C], {}^t\alpha:=[C\times P]$.
    We first compute $\alpha_\ast, {}^t\alpha_\ast$ on $CH^1(C)$.
    For $z\in CH^1(C)$, we have
    $$
    \alpha\cap (z\times C)=0,\ {}^t\alpha \cap (z\times C)=z\times P
    $$
    which shows
    $$
    \alpha_\ast(z)=0,\ {}^t\alpha_\ast(z)=\deg(z)[P]\in \Z[P].
    $$
    This shows that the kernel of ${}^t\alpha_\ast$ on $CH^1(C)$ is equal to the kernel of the degree map
    $$
    \deg:CH^1(C) \to \Z
    $$ and proves the assertion in the case $r=0$.

    By \eqref{eq:Akh} and \eqref{eq:Akh-sym}, we take an element $w$ of $CH^{r+1}(C,r)$ of the following form
    $$
    w:=(p_E)_\ast\bigl(z\cap a_1\cap\cdots\cap a_r\bigr).
    $$
    By the base change formula for algebraic cycles with respect to proper push-forwards and flat pull-backs, it suffices to compute $\alpha_\ast(w)$ in case $E=k$.
    Then, we have
    \begin{align}
        \alpha_\ast(w)
        &={p_2}_\ast\bigl(\alpha\cap p_1^\ast(z\cap a_1\cap\cdots\cap a_r)\bigr)\\
        &={p_2}_\ast\bigl((\alpha\cap p_1^\ast z)\cap p_1^\ast(a_1\cap\cdots\cap a_r)\bigr)=0,
    \end{align}
    and
    \begin{align}
        {}^t\alpha_\ast(w)
        &={p_2}_\ast\bigl({}^t\alpha\cap p_1^\ast(z\cap a_1\cap\cdots\cap a_r)\bigr)\\
        &={p_2}_\ast\bigl(({}^t\alpha\cap p_1^\ast z)\cap p_1^\ast(a_1\cap\cdots\cap a_r)\bigr)\\
        &={p_2}_\ast\bigl([z\times P]\cap p_1^\ast(a_1\cap\cdots\cap a_r)\bigr)\\
        &=\deg(z)\bigl([P]\cap a_1\cap\cdots\cap a_r\bigr)\\
        &=\deg(z)\{a_1, \dots, a_r\}\in K_r^M(k).
    \end{align}
    Here in the last equality, we identify $CH^r(P,r)$ with $K^M_r(k)$.
    This shows that the kernel of ${}^t\alpha_\ast$ on $CH^{r+1}(C,r)$ is equal to the kernel of the map
    $$
    CH^{r+1}(C,r)\to K_r^M(k).
    $$
    Thus, the isomorphism \eqref{eq:Akh} induces
    $$
    CH^{r+1}(h^0(C),r)=0,\ CH^{r+1}(h^1(C),r)\simeq K_r(k;J,\Gm),\ CH^{r+1}(h^2(C),r)\simeq K_r^M(k).
    $$
\end{proof}
By \autoref{lem:decomp_CH}, the inclusion $\iota_{J,r}$ in \eqref{eq:iota} coincides with 
\begin{align}\label{eq:iota2}
    \iota_{J,r}: K_r(k;J,\Gm)\simeq CH^{r+1}(h^1(C),r)\hookrightarrow CH^{r+1}(C,r).
\end{align}

\subsection{\'Etale cycle map}
For a smooth scheme $X$, we use the identification
\[
 CH^q(X,2q-p)=\HM^p(X,\Z(q));
\]
see \cite{Blo86} and \cite[Theorem~1]{Voe02}.  Let
\begin{equation}\label{eq:rho}
 \rho_{X,n}^{i,j}\colon
 CH^{i}(X,j)/n
 \to
 \Het^{2i-j}\bigl(X,\mu_n^{\otimes i}\bigr)
\end{equation}
be the finite coefficient \'etale cycle map.  For $i=j+1$, its
injectivity follows from \cite[Corollary~1.2]{GL01} (see also \cite[Theorem~6.17]{Voe11}).  
Under the Nesterenko--Suslin--Totaro isomorphism
$CH^r(E,r)\simeq K_r^M(E)$, the finite coefficient \'etale cycle map is the
norm residue isomorphism
\[
 K_r^M(E)/n\overset{\sim}{\to}H^r(E,\mu_n^{\otimes r}),
 \qquad
 \set{a_1,\ldots,a_r}\mapsto
 \delta_E(a_1)\cup\cdots\cup\delta_E(a_r).
\]
Moreover, under the natural isomorphisms $CH^1(X)\simeq \Het^1(X, \Gm)$ and $CH^1(X,1)\simeq \Gamma(X,\Gm)$, the cycle maps $\rho^{1,0}_{X,n}, \rho^{1,1}_{X,n}$ coincide with the Kummer connecting homomorphisms
\begin{align}\label{eq:1-cyc}
    &\Het^1(X, \Gm)/n\to \Het^2(X,\mu_n),\\
    &\Gamma(X,\Gm)/n\to \Het^1(X,\mu_n).
\end{align}

\begin{lem}\label{lem:cycl}
    Let $X$ be a smooth projective variety over $k$ of dimension $d$.
    For any $\alpha\in CH^d(X\times X)$, put $\rho(\alpha):=\rho_{X\times X,n}^{d,0}(\alpha)$ in $\Het^{2d}(X\times X,\mu_n^{\otimes d})$. Then the following diagram commutes:
    $$
    \xymatrix{
    CH^i(X,j)/n \ar[r]^-{\rho^{i,j}_{X,n}} \ar[d]^{\alpha_*} & H_\et^{2i-j}(X,\mu_n^{\otimes i})\ar[d]^{\rho(\alpha)_*}\\
    CH^i(X,j)/n \ar[r]^-{\rho^{i,j}_{X,n}}  & H_\et^{2i-j}(X,\mu_n^{\otimes i})
    }
    $$
\end{lem}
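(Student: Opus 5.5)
The plan is to break the diagram into the three operations that define $\alpha_*$ and check compatibility of the cycle map with each. By definition,
\[
\alpha_*(z)={p_2}_*\bigl(\alpha\cap p_1^*(z)\bigr),
\qquad
\rho(\alpha)_*(x)={p_2}_*\bigl(\rho(\alpha)\cup p_1^*(x)\bigr).
\]
Here the second operation is the étale-cohomological action of the class $\rho(\alpha)\in\Het^{2d}(X\times X,\mu_n^{\otimes d})$. The étale push-forward ${p_2}_*$ is the Gysin (trace) map for the smooth proper morphism $p_2$ of relative dimension $d$; it lowers degree by $2d$ and twist by $d$. It therefore suffices to prove three facts about the cycle maps $\rho^{i,j}_{Y,n}$ on smooth varieties $Y$:
\begin{enumerate}
\item compatibility with flat pull-back, applied to $p_1^*$;
\item multiplicativity, $\rho(a\cap b)=\rho(a)\cup\rho(b)$ for the intersection product on higher Chow groups;
\item compatibility with proper push-forward, applied to ${p_2}_*$ (smooth projective, codimension shift $-d$), against the étale Gysin map.
\end{enumerate}
Chaining these gives
\[
\rho\bigl({p_2}_*(\alpha\cap p_1^*z)\bigr)
={p_2}_*\bigl(\rho(\alpha)\cup p_1^*\rho(z)\bigr)
=\rho(\alpha)_*\rho(z),
\]
which is the claimed commutativity. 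The maps $\alpha_*$ also descend modulo $n$, since all operations are additive.

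To obtain (i)–(iii) I would use the identification $CH^i(X,j)=\HM^{2i-j}(X,\Z(i))$ recalled above. In that language $\rho$ is the composite
\[
\HM^{p}(X,\Z(q))/n\hookrightarrow \HM^{p}(X,\Z/n(q))\to \HM^p_{\et}(X,\Z/n(q))\simeq \Het^p(X,\mu_n^{\otimes q}),
\]
namely change of topology followed by the Suslin–Voevodsky rigidity isomorphism $\Z/n(q)_{\et}\simeq\mu_n^{\otimes q}$. Both steps come from morphisms of sheaves or complexes, so pull-backs are respected. Products are respected because change of topology is a ring map, and the identification $\Z/n(1)_\et\simeq\mu_n$ is compatible with tensor products. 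The product on motivic cohomology matches the intersection product $\cap$ on higher Chow groups for smooth varieties.

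For the push-forward in (iii), I would not construct anything new. Instead I would factor $p_2$ as a closed embedding $X\times X\hookrightarrow X\times X\times\mathbb P^N$ followed by the projection, or more simply reduce to the classical statement that Bloch's cycle map (Geisser–Levine) commutes with proper push-forward. Alternatively, one can invoke that the motivic-to-étale comparison is a morphism of Bloch–Ogus or Borel–Moore type twisted Poincaré duality theories. Then push-forwards are defined dually via cohomology with supports and purity, and compatibility follows from matching the fundamental classes of smooth subvarieties: the Gysin isomorphism in both theories sends the fundamental class to the cycle class.

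I expect step (iii) to be the main obstacle, because push-forward in higher Chow groups is defined at cycle level while the étale Gysin map is defined through duality. The cleanest route is to cite the known compatibility of the cycle class map with proper push-forward for smooth projective morphisms; see Geisser–Levine, and Bloch's original treatment of the cycle map for higher Chow groups. Steps (i) and (ii) are standard. In the case $j=0$, $\rho(\alpha)$ is just the usual cycle class, so the argument specializes to the classical fact that the cycle class map commutes with the action of correspondences.
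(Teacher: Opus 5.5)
Your proposal is correct and is essentially the paper's proof: the paper also writes $\alpha_*(z)={p_2}_*(\alpha\cap p_1^*z)$ and chains the compatibilities of the cycle map with proper push-forward along $p_2$, with the intersection/cup product, and with flat pull-back along $p_1$. The paper cites these compatibilities as standard without further justification. Your extra discussion of how to obtain (i)--(iii), via change of topology and the rigidity isomorphism, goes beyond what the paper spells out but is consistent with it.
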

\begin{proof}
    Let $p_i: X\times X\to X$ be the $i$-th projection.
    Take $z\in CH^i(X,j)$.
    By the compatibility of the cycle map with proper push-forward and flat pull-back, we have
\begin{align*}
 \rho^{i,j}_{X,n}\bigl(\alpha_\ast(z)\bigr)
 &=
 \rho^{i,j}_{X,n}
 \bigl({p_2}_\ast(\alpha\cap p_1^\ast z)\bigr)\\
 &=
 {p_2}_\ast
 \left(
 \rho^{d+i,j}_{X\times X,n}
 \bigl(\alpha\cap p_1^\ast z\bigr)
 \right)\\
 &=
 {p_2}_\ast
 \left(
 \rho(\alpha)\cup
 \rho^{i,j}_{X\times X,n}(p_1^\ast z)
 \right)\\
 &=
 {p_2}_\ast
 \left(
 \rho(\alpha)\cup
 p_1^\ast\bigl(\rho^{i,j}_{X,n}(z)\bigr)
 \right)\\
 &=
 \rho(\alpha)_\ast
 \bigl(\rho^{i,j}_{X,n}(z)\bigr).
\end{align*}
\end{proof}

For an effective Chow motive $M=(X,\pi)$, we define $H_\et^a(M,\mu_n^{\otimes b}):=\rho(\pi)_\ast H_\et^a(X,\mu_n^{\otimes b})$.
By \autoref{lem:cycl}, we have a \'etale cycle map for $M$;
$$
\rho^{i,j}_{M,n}:CH^i(M,j)/n\to H_\et^{2i-j}(M,\mu_n^{\otimes i}).
$$

\begin{lem}[\text{\cite[Lemma 2.5]{Yam05}}]\label{lem:decomp_Het}
    For $t\ge 1$, a $k$-rational point $P$ in $C$ gives
    \begin{align}
    &H^{s}_\et(h^0(C),\mu_n^{\otimes t})
    \simeq \Het^{s}(k,\mu_n^{\otimes t}),\\
    &H^s_\et(h^1(C),\mu_n^{\otimes t})
    \simeq H_\et^{s-1}(k,J[n]\otimes\mu_n^{\otimes (t-1)}),\\
    &H^{s}_\et(h^2(C),\mu_n^{\otimes t})
    \simeq H_\et^{s-2}(k,\mu_n^{\otimes (t-1)}).
    \end{align}
\end{lem}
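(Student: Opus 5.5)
The plan is to compute $H^s_\et(C,\mu_n^{\otimes t})$ through the Hochschild--Serre spectral sequence for $\pi\colon C\to\operatorname{Spec}k$, namely
\[
E_2^{a,b}=\Het^a\bigl(k,\Het^b(\bar C,\mu_n^{\otimes t})\bigr)\Rightarrow \Het^{a+b}(C,\mu_n^{\otimes t}),
\]
and then to read off which pieces are cut out by the three projectors $\rho([P\times C])$, $\rho(1-[P\times C]-[C\times P])$ and $\rho([C\times P])$.

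For the geometric input, recall $\Het^0(\bar C,\mu_n^{\otimes t})=\mu_n^{\otimes t}$ and $\Het^2(\bar C,\mu_n^{\otimes t})\simeq\mu_n^{\otimes(t-1)}$ via the trace. The Kummer sequence together with $\operatorname{Pic}^0=J$ and $\operatorname{Pic}/\operatorname{Pic}^0=\Z$ gives $\Het^1(\bar C,\mu_n)\simeq J[n]$ (the Weil pairing/autoduality fixes the normalization), hence $\Het^1(\bar C,\mu_n^{\otimes t})\simeq J[n]\otimes\mu_n^{\otimes(t-1)}$.

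The rational point provides splittings. The maps $\pi^*$ and $P^*$ split off $\Het^s(k,\mu_n^{\otimes t})$. The map $P_*\colon \Het^{s-2}(k,\mu_n^{\otimes(t-1)})\to \Het^s(C,\mu_n^{\otimes t})$ composed with $\pi_*$ is the identity, so it splits off the top piece. These splittings force the spectral sequence to degenerate and give a decomposition $\Het^s(C)=\Het^s(k)\oplus\Het^{s-1}(k,\Het^1(\bar C))\oplus\Het^{s-2}(k,\mu_n^{\otimes(t-1)})$, the middle term being $\ker\pi_*\cap\ker P^*$.

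It remains to identify the action of the projectors. Using Lemma~\ref{lem:cycl}'s formalism with $\rho([P\times C])=p_1^*\rho([P])$, we get $\rho([P\times C])_*(x)=p_{2*}(p_1^*(\rho[P]\cup x))=\pi^*\pi_*(P_*P^*x)$. Here $\pi_*P_*=\mathrm{id}$, so this equals $\pi^*P^*x$, the projection onto the $\Het^s(k)$ summand. Symmetrically, $\rho([C\times P])_*(x)=P_*\pi_*x$, the projection onto the top summand. The complementary projector therefore cuts out $\ker\pi_*\cap\ker P^*$, the middle summand.

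The main obstacle is the degeneration and the identification of the middle piece with $\Het^{s-1}(k,J[n]\otimes\mu_n^{\otimes(t-1)})$, and in particular checking that the differentials vanish. Degeneration of Hochschild--Serre for a curve with a section is standard: the edge maps $\Het^s(k,\mu_n^{\otimes t})\to\Het^s(C,\mu_n^{\otimes t})$ and $\Het^s(C,\mu_n^{\otimes t})\to\Het^{s-2}(k,\mu_n^{\otimes(t-1)})$ both split. I would cite this argument as in \cite[Lemma 2.5]{Yam05}. Alternatively, I would work at the level of the derived category, where $R\pi_*\mu_n^{\otimes t}\simeq\mu_n^{\otimes t}\oplus \Het^1(\bar C,\mu_n^{\otimes t})[-1]\oplus\mu_n^{\otimes(t-1)}[-2]$ via the splittings, and then take cohomology.
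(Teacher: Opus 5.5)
Your proposal is correct, but it reaches the $h^1$ statement by a different route from the paper. For $h^0$ and $h^2$ the two arguments agree: writing $f$ for your structure map $\pi$, your formulas $\rho([P\times C])_*=f^*P^*$ and $\rho([C\times P])_*=P_*f_*$ are exactly the paper's $f^*i^*$ and $i_*f_*$.

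\textbf{The paper's route.} It applies the projector $\rho(1_C-[P\times C]-[C\times P])_*$ to the Hochschild--Serre spectral sequence itself. It asserts that on $E_2$ this projector acts through $\Het^b(\bar C,\mu_n^{\otimes t})$, so it kills the rows $b=0,2$. The projected spectral sequence therefore has a single row and degenerates.

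\textbf{Your route.} You show that the \emph{whole} spectral sequence degenerates. The edge map $f^*$ from row $0$ is split injective by $P^*$, so no differential hits row $0$. The edge map to row $2$, which is $f_*$ after the trace identification, is split surjective by $P_*$, so no differential leaves row $2$. Together these kill every $d_2$ and $d_3$. Then $\ker f_*$ is the filtration step with graded pieces $E_2^{s-1,1}$ and $E_2^{s,0}=f^*\Het^s(k,\mu_n^{\otimes t})$, and $P^*$ splits off the latter. Hence the image $\ker f_*\cap\ker P^*$ of the middle projector maps isomorphically onto $E_2^{s-1,1}$. Here you implicitly use that the two outer projectors are orthogonal; this holds, since $P^*P_*=0$ (the normal bundle at a rational point is trivial) and $f_*f^*=0$ for degree reasons.

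\textbf{What each buys.} Your version uses only $f^*$, $f_*$, $P^*$, $P_*$ and the standard edge maps. It therefore avoids the compatibility of correspondence actions (which involve the Gysin map $p_{2*}$) with the Hochschild--Serre spectral sequence, which the paper invokes without proof. It also gives the explicit description $\Het^s(h^1(C),\mu_n^{\otimes t})=\ker f_*\cap\ker P^*$. The paper's version is more motivic. It would extend to any Chow--K\"unneth projector that acts on geometric cohomology as projection onto a single degree, including middle rows where split edge maps are not available.

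(Minor point: no Weil pairing is needed for $\Het^1(\bar C,\mu_n)\simeq J[n]$. The Kummer sequence gives it directly, because $\bar k^\times$ is divisible and $\operatorname{Pic}/\operatorname{Pic}^0\simeq\Z$ is torsion-free.)
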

\begin{proof}
    Put
    $\alpha:=[P\times C]$,
    ${}^t\alpha:=[C\times P]$, and 
    $\pi:=1_C-\alpha-{}^t\alpha$.
    Thus
    \[
    h^0(C)=(C,\alpha),\qquad
    h^1(C)=(C,\pi),\qquad
    h^2(C)=(C,{}^t\alpha).
    \]
    Since $
    h^0(C)\simeq (\operatorname{Spec}(k), 1_k), h^2(C)\simeq \mathbb{L},
    $
    the statement is clear for $h^0(C)$ and $h^2(C)$.
    Indeed, for $\rho(\alpha)=\rho^{1,0}_{C\times C,n}([P\times C])$, we have
    $$
    \rho(\alpha)_\ast: \Het^s(C,\mu_n^{\otimes t})\xrightarrow{i^\ast} \Het^s(k,\mu_n^{\otimes t})\xrightarrow{f^\ast} \Het^s(C,\mu_n^{\otimes t})
    $$
    and
    $$
    \rho({}^t\alpha)_\ast: \Het^s(C,\mu_n^{\otimes t})\xrightarrow{f_\ast} \Het^{s-2}(k,\mu_n^{\otimes (t-1)})\xrightarrow{i_\ast} \Het^s(C,\mu_n^{\otimes t}),
    $$
    where $i:P\hookrightarrow C$ is the closed immersion and $f:C\to \operatorname{Spec}(k)$ is the structure map.
    
    For $h^1(C)$, we use the Hochschild--Serre spectral sequence
    \[
    E_2^{a,b}
    =\Het^a\bigl(k, \Het^b(\overline C,
     \mu_n^{\otimes t})\bigr)
 \Longrightarrow
 \Het^{a+b}\bigl(C,\mu_n^{\otimes t}\bigr).
\]
See \cite[Theorem~12.7 and Example~12.8]{Mil13} and
\cite[Chapter~II, Section~4, Theorem~2.4.1]{NSW08}.  This spectral
sequence is multiplicative; on its $E_2$-page the product is induced
by the cup products in continuous Galois cohomology and geometric
\'etale cohomology.  See \cite[pp.~118--119]{HS53} and
\cite[Chapter~I, Section~4]{NSW08}.

The Kummer sequence and the trace isomorphism give
\[
 \Het^0(\overline C, \mu_n^{\otimes t}) \simeq \mu_n^{\otimes t},\ \Het^1(\overline C,\mu_n^{\otimes t})
 \simeq J[n]\otimes\mu_n^{\otimes (t-1)},\ \mbox{and} 
 \
 \Het^2(\overline C,\mu_n^{\otimes t})
 \simeq\mu_n^{\otimes (t-1)};
\]
see \cite[Proposition~14.2 and Theorem~24.1]{Mil13} and
\cite[Sections~8.2--8.3]{Fu15}. 

    The actions of $\alpha$ and ${}^t\alpha$ are compatible with the
    Hochschild--Serre spectral sequence. On the $E_2$-page, their
    actions are induced by the actions of the corresponding
    correspondences on $\Het^b(\overline C,\mu_n^{\otimes t})$.
    After base change to $\overline k$, the same formulas as above show
    that $\rho(\alpha)_\ast$ is the identity on
    $\Het^0(\overline C,\mu_n^{\otimes t})$ 
    and is zero on
    $\Het^1(\overline C,\mu_n^{\otimes t})$ 
    and $\Het^2(\overline C,\mu_n^{\otimes t})$.
    Similarly, $\rho({}^t\alpha)_\ast$ is the identity on
    $\Het^2(\overline C,\mu_n^{\otimes t})$
    and is zero on
    $\Het^0(\overline C,\mu_n^{\otimes t})$
    and $\Het^1(\overline C,\mu_n^{\otimes t})$.
    Since
    \[
    \rho(\pi)_\ast
    =
    1-\rho(\alpha)_\ast-\rho({}^t\alpha)_\ast,
    \]
    it follows that $\rho(\pi)_\ast$ acts as zero on
    $\Het^0(\overline C,\mu_n^{\otimes t})$ 
    and $\Het^2(\overline C,\mu_n^{\otimes t})$,
    and as the identity on
    $\Het^1(\overline C,\mu_n^{\otimes t})$.
    Therefore the Hochschild--Serre spectral sequence obtained by
    applying $\rho(\pi)_\ast$ has only the row $b=1$, namely
    \[
    E_2^{a,1}
    =
    \Het^a\bigl(
    k,J[n]\otimes\mu_n^{\otimes(t-1)}
    \bigr).
    \]
    Hence there are no nonzero differentials, and we obtain
    \[
    H^s_\et(h^1(C),\mu_n^{\otimes t})
    =
    \rho(\pi)_\ast
    \Het^s(C,\mu_n^{\otimes t})
    \simeq
    \Het^{s-1}\bigl(
    k,J[n]\otimes\mu_n^{\otimes(t-1)}
    \bigr).
    \]
This shows the assertion for $h^1(C)$.
\end{proof}

For $r\ge 0$, let $\iota^\et_{J,r}$ denote the inclusion 
\begin{align}\label{eq:iota_et}
 \iota^\et_{J,r}\colon
 \Het^{r+1}\bigl(k,J[n]\otimes\mu_n^{\otimes r}\bigr)
 \hookrightarrow
 \Het^{r+2}\bigl(C,\mu_n^{\otimes(r+1)}\bigr)
\end{align}
induced by the Hochschild--Serre spectral sequence and the
Chow--K\"unneth splitting, with the total-complex convention of
\cite[Chapter~II, Section~2, Definition~2.2.3 and p.~104]{NSW08}.

\begin{lem}\label{lem:KJ}
For every $z\in J(k)=CH_0(C)^0$, one has
\begin{equation}\label{eq:KJ}
 \rho_{C,n}^{1,0}(z)
 =\iota_{J,0}^{\et}\bigl(\delta_J(z)\bigr)
 \quad\text{in }H_\et^2(C,\mu_n).
\end{equation}
Equivalently, under the identification
\[
 H_\et^2\bigl(h^1(C),\mu_n\bigr)\simeq \Het^1(k,J[n]),
\]
the cycle map
\[
 \rho_{h^1(C),n}^{1,0}\colon J(k)/n
 \to \Het^1(k,J[n])
\]
coincides with the Kummer connecting homomorphism $\delta_J$.
\end{lem}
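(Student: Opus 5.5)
We have to show that, for $z\in J(k)$ viewed as a degree-zero divisor class on $C$, the cycle class of $z$ in $\Het^2(C,\mu_n)$ lies in the $h^1$-summand and corresponds to the Kummer class $\delta_J(z)$ in the $E_2^{1,1}$-term of the Hochschild--Serre spectral sequence.

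\emph{Reduction to the $h^1$-summand.} By \autoref{lem:decomp_CH} (case $r=0$) we have $\pi_\ast z=z$ for $\pi=1_C-\alpha-{}^t\alpha$, so \autoref{lem:cycl} gives
\[
\rho_{C,n}^{1,0}(z)=\rho(\pi)_\ast\rho_{C,n}^{1,0}(z)\in H^2_\et(h^1(C),\mu_n).
\]
It therefore suffices to identify its image under the isomorphism of \autoref{lem:decomp_Het} with $\delta_J(z)$. By \eqref{eq:1-cyc}, $\rho_{C,n}^{1,0}$ is the Kummer boundary $\operatorname{Pic}(C)/n\to\Het^2(C,\mu_n)$ coming from $0\to\mu_n\to\Gm\xrightarrow{n}\Gm\to0$ on $C_\et$.

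\emph{Comparison through the Hochschild--Serre spectral sequence for $\Gm$.} The key is a morphism of spectral sequences. Apply $Rf_\ast$ to the Kummer triangle $\mu_n\to\Gm\xrightarrow{n}\Gm\to\mu_n[1]$; this gives a triangle in $D(k_\et)$, and the boundary $R f_\ast\Gm\to Rf_\ast\mu_n[1]$ induces a map from the Hochschild--Serre spectral sequence for $\Gm$ (shifted by one) to that for $\mu_n$. The spectral sequence for $\Gm$ has
\[
E_2^{a,b}=\Het^a\bigl(k,\Het^b(\overline C,\Gm)\bigr),
\]
with $\Het^0(\overline C,\Gm)=\overline k^\times$ and $\Het^1(\overline C,\Gm)=\operatorname{Pic}(\overline C)$. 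A class $z\in J(k)\subset\operatorname{Pic}(C)$ maps to the edge term $\Het^0(k,\operatorname{Pic}(\overline C))$, landing in $J(\overline k)^{G_k}=J(k)$. On the $\overline C$-level, the Kummer boundary sends $\operatorname{Pic}(\overline C)\supset J(\overline k)$ via the short exact sequence $0\to J[n]\to J(\overline k)\xrightarrow{n}J(\overline k)\to0$ (divisibility of $J(\overline k)$), and $J[n]=\Het^1(\overline C,\mu_n)$. Taking Galois cohomology, the induced map on $E_2$-terms
\[
\Het^0\bigl(k,J(\overline k)\bigr)\to\Het^1\bigl(k,\Het^1(\overline C,\mu_n)\bigr)=\Het^1(k,J[n])
\]
is exactly the Kummer connecting map $\delta_J$, up to the sign fixed by the total-complex convention cited in \eqref{eq:iota_et}. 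Since the map on abutments restricts on the filtration to the map on graded pieces, and after applying $\rho(\pi)_\ast$ the target has only the row $b=1$ (\autoref{lem:decomp_Het}), this yields the required identity \eqref{eq:KJ}.

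\emph{Main obstacle.} The delicate step is making the connecting map of a distinguished triangle act compatibly on Hochschild--Serre spectral sequences, with the degree shift moving $E^{0,1}$ for $\Gm$ to $E^{1,1}$ for $\mu_n$. This does not follow from a naive morphism of spectral sequences, because the boundary mixes the Galois and geometric degrees. To handle it, I would factor the argument through the truncation $\tau_{\le1}Rf_\ast\Gm$, which is quasi-isomorphic to a two-term complex with cohomology $\overline k^\times$ in degree $0$ and $\operatorname{Pic}(\overline C)$ in degree $1$. I would then check, at the level of cocycles (using Čech/Galois cochains in the total-complex convention of \cite{NSW08}), that the composite boundary is the Galois connecting map of $0\to J[n]\to J(\overline k)\to J(\overline k)\to 0$. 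The signs, and the fact that the $\overline k^\times$-row contributes only to the $h^0$ and $h^2$ parts, which $\rho(\pi)_\ast$ kills, should be checked explicitly.
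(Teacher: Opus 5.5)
Your first two steps are the paper's own. $\pi_\ast z=z$ (from the proof of Lemma~\ref{lem:decomp_CH}) together with Lemma~\ref{lem:cycl} puts $\rho^{1,0}_{C,n}(z)$ in the $h^1$-part, and $\rho^{1,0}_{C,n}$ is the Kummer boundary $\partial$. The paper then settles the comparison in one line (``compatibility of the Kummer sequence with the Hochschild--Serre edge morphism''), and this is exactly where your middle paragraph goes wrong.

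The morphism $\partial\colon Rf_\ast\Gm\to Rf_\ast\mu_n[1]$ induces a morphism of spectral sequences that preserves the Galois degree. It maps $E_2^{a,b}(\Gm)=\Het^a(k,R^bf_\ast\Gm)$ to $E_2^{a,b+1}(\mu_n)$ through the sheaf map $R^bf_\ast\Gm\to R^{b+1}f_\ast\mu_n$. For $b=1$ this is $\operatorname{Pic}(\overline C)\to\Het^2(\overline C,\mu_n)\simeq\Z/n$, the degree modulo $n$, which kills $J(\overline k)$. So there is no induced $E_2$-map $\Het^0(k,J(\overline k))\to\Het^1(k,J[n])$. The morphism of spectral sequences only yields $\partial z\in F^1\Het^2(C,\mu_n)$. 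The class of $\partial z$ in $F^1/F^2$, which is the whole content of the lemma, is invisible to it. Your last paragraph recognizes this, but there the decisive identification is only announced (``I would then check at the level of cocycles''), so as written the proposal does not prove it.

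\textbf{Closing the gap.} The truncation idea does work, and no cocycles are needed.

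\emph{Setup.} Let $G'$ be the fibre of $\tau_{\le1}Rf_\ast\Gm\to\Z[-1]$ (the degree on $\mathcal H^1=\operatorname{Pic}(\overline C)$). Then $\mathcal H^0G'=\overline k^\times$, $\mathcal H^1G'=J(\overline k)$, and $\Het^1(k,G')=\ker(\deg\colon\operatorname{Pic}(C)\to\Z)=J(k)$.

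\emph{The cone of $n$.} Since $n$ is surjective on $\overline k^\times$ and on $J(\overline k)$, the cone $Q$ of $n\colon G'\to G'$ has $\mathcal H^{-1}Q=\mu_n$, $\mathcal H^0Q=J[n]$, and no other cohomology. Take a fill-in of the map of triangles induced by $G'\to Rf_\ast\Gm$. It is forced on $\mathcal H^{-1}$ and $\mathcal H^0$, because $\mu_n\to\overline k^\times$ and $R^1f_\ast\mu_n\to\operatorname{Pic}(\overline C)$ are injective. Hence it gives $Q\simeq(\tau_{\le1}Rf_\ast\mu_n)[1]$ compatibly with $\partial$. Therefore the $E^{1,1}$-component of $\partial z$ is obtained by applying $\Het^1(k,-)$ to $G'\to Q\to\mathcal H^0(Q)=J[n]$ and evaluating at $z$.

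\emph{Comparison with $\delta_J$.} The map of triangles induced by $G'\to\mathcal H^1(G')[-1]=J[-1]$ gives a map $Q\to\mathrm{Cone}(n|_{J[-1]})=J[n]$ which is the identity on $\mathcal H^0$. Since $\operatorname{Hom}(Q,J[n])=\operatorname{End}(J[n])$ (there are no negative Ext groups), this map is the truncation $Q\to\mathcal H^0(Q)$. Hence the $E^{1,1}$-component of $\partial z$ is the image of $z$ under the boundary of $J[n][-1]\to J[-1]\xrightarrow{n}J[-1]$, i.e.\ $\pm\delta_J(z)$.

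\emph{Identifying the $h^1$-part.} The $h^1$-part of $\Het^2(C,\mu_n)$ lies in $F^1$ and meets $F^2=f^\ast\Het^2(k,\mu_n)$ (the $h^0$-part) trivially. So it maps isomorphically onto $F^1/F^2=E_\infty^{1,1}=E_2^{1,1}$, which gives \eqref{eq:KJ}.

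With this inserted your argument is complete, and more explicit than the paper's. The sign coming from the shift is absorbed in the convention defining $\iota^{\et}_{J,0}$ and is irrelevant for Theorem~\ref{thm:main}.
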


\begin{proof}
Under the identification $CH^1(C)=\operatorname{Pic}(C)$, the map
$\rho_{C,n}^{1,0}$ is the connecting homomorphism associated with the
Kummer sequence on $C$. Since a degree-zero class belongs to the
$h^1(C)$-part, the compatibility of the Kummer sequence with the
Hochschild--Serre edge morphism gives the commutative diagram
\[
\xymatrix{
 J(k)/n\ar[r]^-{\rho_{h^1(C),n}^{1,0}}\ar[d]_{\delta_J}
 &H_\et^2(h^1(C),\mu_n)\ar[d]^{\simeq}\\
 \Het^1(k,J[n])\ar@{=}[r]&\Het^1(k,J[n]).
}
\]
This proves the equivalent assertion, and \eqref{eq:KJ} follows by
including the $h^1(C)$-part into $H_\et^2(C,\mu_n)$.
\end{proof}

\begin{lem}\label{lem:HS-sign}
Let $f\colon C\to\operatorname{Spec}(k)$ be the structure morphism.
For $\alpha\in \Het^1(k,J[n])$ and $\beta\in \Het^r(k,\mu_n^{\otimes r})$, 
one has
\begin{equation}\label{eq:HS-sign}
 \iota_{J,0}^{\et}(\alpha)\cup f^*\beta
 =(-1)^r\iota_{J,r}^{\et}(\alpha\cup\beta).
\end{equation}
\end{lem}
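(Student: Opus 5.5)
We work entirely inside the multiplicative Hochschild--Serre spectral sequence
\[
E_2^{a,b}=\Het^a\bigl(k,\Het^b(\overline C,\mu_n^{\otimes t})\bigr)\Longrightarrow \Het^{a+b}\bigl(C,\mu_n^{\otimes t}\bigr),
\]
and use that $\iota^{\et}_{J,r}$ is defined by the row $b=1$ of the $h^1(C)$-part. The plan is to compare the cup product $\iota^{\et}_{J,0}(\alpha)\cup f^*\beta$ with the product of the corresponding $E_2$-classes. The exponents then follow from the sign rule in the total-complex convention of \cite{NSW08}.

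First, $f^*\beta$ lies in the bottom row. It is the image of $\beta\in E_2^{r,0}=\Het^r(k,\mu_n^{\otimes r})$ under the edge map $\Het^r(k,\mu_n^{\otimes r})\to\Het^r(C,\mu_n^{\otimes r})$. Since $f^*\beta$ lies in the $h^0(C)$-part and $\rho(\pi)_*$ is compatible with cup product by $f^*$ of classes from $k$ (projection formula: $\rho(\pi)_*(x\cup f^*\beta)=\rho(\pi)_*(x)\cup f^*\beta$, since $f^*\beta$ pulled back along either projection of $C\times C$ is the same class), the product $\iota^{\et}_{J,0}(\alpha)\cup f^*\beta$ stays in the $h^1(C)$-part $\Het^{r+2}(h^1(C),\mu_n^{\otimes(r+1)})$. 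By \autoref{lem:decomp_Het}, this part is exactly $E_2^{r+1,1}$ with no filtration ambiguity, because the $h^1$-spectral sequence is concentrated in one row.

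Multiplicativity then identifies the class as the $E_2$-product of $\alpha\in E_2^{1,1}$ and $\beta\in E_2^{r,0}$. This lands in $E_2^{1+r,1}=\Het^{r+1}(k,\Het^1(\overline C,\mu_n)\otimes\mu_n^{\otimes r})$, and it is computed by the pairing $\Het^1(\overline C,\mu_n)\otimes\Het^0(\overline C,\mu_n^{\otimes r})\to\Het^1(\overline C,\mu_n^{\otimes(r+1)})$, which is just the identification $J[n]\otimes\mu_n^{\otimes r}$.

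The main obstacle is the sign. By the convention of \cite[pp.~118--119]{HS53} and \cite[Ch.~II, \S2]{NSW08}, the product of $x\in E_2^{a,b}$ and $y\in E_2^{a',b'}$ equals $(-1)^{a'b}$ times the Galois cup product of the coefficient products. I would verify this on the double complex of continuous cochains with values in a Godement (or \v{C}ech) resolution: the Koszul sign arises when moving the degree-$a'$ Galois cochain of $y$ past the degree-$b$ geometric component of $x$. Here $b=1$ and $a'=r$, giving $(-1)^r$, so $\iota^{\et}_{J,0}(\alpha)\cup f^*\beta=(-1)^r\iota^{\et}_{J,r}(\alpha\cup\beta)$. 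I would double-check the orientation of the Koszul convention against the case $r=1$, e.g.\ via compatibility with Spiess's formula, to make sure the sign is $(-1)^{a'b}$ rather than $(-1)^{ab'}$. The latter would give no sign here, so the convention fixed in the definition of $\iota^{\et}_{J,r}$ is what determines the answer.
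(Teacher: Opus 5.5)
Your proposal is correct and follows the same route as the paper, whose proof is exactly the multiplicativity of the Hochschild--Serre spectral sequence with the Koszul sign $(-1)^{ba'}$, applied to the bidegrees $(1,1)$ and $(r,0)$ to get $(-1)^r$. Your extra steps make explicit what the paper compresses into the phrase ``under the Chow--K\"unneth splitting'': the projection formula keeps the product in the $h^1(C)$-part, which has a single graded piece, and you correctly note that the sign is fixed by the total-complex convention of \cite{NSW08} built into the definition of $\iota^{\et}_{J,r}$.
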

\begin{proof}
In the standard total complex for the Hochschild--Serre spectral
sequence, the product of classes of bidegrees $(a,b)$ and $(a',b')$
acquires the sign $(-1)^{ba'}$; see
\cite[Chapter II, Section~1]{HS53} and
\cite[Chapter~II, Section~4, proof of Theorem~2.4.1]{NSW08}.
Here $\alpha$ has bidegree $(1,1)$ and $\beta$ has bidegree $(r,0)$.
Thus their product acquires the sign
$(-1)^{1\cdot r}=(-1)^r$.
Under the Chow--K\"unneth splitting, this is precisely
\eqref{eq:HS-sign}.
\end{proof}


\section{Proof of the main theorem}

\begin{proof}[Proof of \autoref{thm:main}]
By \autoref{lem:decomp_CH}, the map $\iota_{J,r}$ is the inclusion
of a direct summand. Hence it remains injective after reduction
modulo $n$. From \autoref{lem:decomp_CH} and
\autoref{lem:decomp_Het}, we therefore have a commutative diagram
\[
\xymatrix{
 K_r(k;J,\Gm)/n\ar@{^{(}->}[r]^{\iota_{J,r}}_{\eqref{eq:iota2}}\ar[d]^{\rho_{h^1(C),n}^{r+1,r}}
 &CH^{r+1}(C,r)/n\ar@{^{(}->}[d]^{\rho_{C,n}^{r+1,r}}\\
 \Het^{r+1}\bigl(k,J[n]\otimes\mu_n^{\otimes r}\bigr)
 \ar@{^{(}->}[r]^-{\iota_{J,r}^{\et}}_{\eqref{eq:iota_et}}
 &\Het^{r+2}\bigl(C,\mu_n^{\otimes(r+1)}\bigr).
}
\]
The right vertical map is injective by
\cite[Corollary~1.2]{GL01} (see also
\cite[Theorem~6.17(1)]{Voe11}); hence the left vertical map
\[
 \rho_{h^1(C),n}^{r+1,r}\colon
 K_r(k;J,\Gm)/n
 \longrightarrow \Het^{r+1}\bigl(k,J[n]\otimes\mu_n^{\otimes r}\bigr)
\]
is also injective.

It remains to compare the \'etale cycle map
$\rho_{h^1(C),n}^{r+1,r}$ with the Galois symbol map
$S_n^{J,r}$. We first consider symbols defined over the base field.
Let $f\colon C\to\operatorname{Spec}(k)$ be the structure morphism.
For $z\in J(k)$ and $a_1,\ldots,a_r\in k^\times$, the compatibility
of the cycle map with the intersection product and the cup product gives
\begin{align*}
 \iota_{J,r}^{\et}\circ\rho_{h^1(C),n}^{r+1,r}
 \bigl(\{z,a_1,\ldots,a_r\}_{k/k}\bigr)
 &=\rho_{C,n}^{r+1,r}
 \bigl(z\cap a_1\cap\cdots\cap a_r\bigr)\\
 &=\rho_{C,n}^{1,0}(z)\cup
   \rho_{C,n}^{1,1}(a_1)\cup\cdots\cup
   \rho_{C,n}^{1,1}(a_r)\\
 &\stackrel{\eqref{eq:KJ}}{=}
 \iota_{J,0}^{\et}\bigl(\delta_J(z)\bigr)\cup
 f^*\delta(a_1)\cup\cdots\cup f^*\delta(a_r)\\
 &\stackrel{\eqref{eq:HS-sign}}{=}
 (-1)^r\iota_{J,r}^{\et}\bigl(
 \delta_J(z)\cup\delta(a_1)\cup\cdots\cup\delta(a_r)
 \bigr).
\end{align*}
Thus the desired comparison holds for symbols over $k$.

Now let $E/k$ be a finite extension, and let
$p_E\colon C_E\to C$ be the projection. Write
$\iota_{J_E,r}^{\et}$ for the corresponding inclusion over $E$.
The naturality of the Hochschild--Serre spectral sequence and the
compatibility of proper push-forward with trace give a commutative
diagram
\begin{equation}\label{eq:trace}
\vcenter{
\xymatrix{
 \Het^{r+1}\bigl(E,J[n]\otimes\mu_n^{\otimes r}\bigr)
 \ar[r]^-{\iota_{J_E,r}^{\et}}\ar[d]_{\Cor_{E/k}}
 &\Het^{r+2}\bigl(C_E,\mu_n^{\otimes(r+1)}\bigr)
 \ar[d]^{(p_E)_*}\\
 \Het^{r+1}\bigl(k,J[n]\otimes\mu_n^{\otimes r}\bigr)
 \ar[r]^-{\iota_{J,r}^{\et}}
 &\Het^{r+2}\bigl(C,\mu_n^{\otimes(r+1)}\bigr).
}}
\end{equation}
See \cite[Expos\'e~IX, Section~5]{SGA4} and
\cite[Section~8.2]{Fu15}.
Applying the comparison over $E$ and using the compatibility of the
cycle map with proper push-forward, we obtain, for
$z\in J(E)$ and $a_1,\ldots,a_r\in E^\times$,
\begin{align*}
 &\iota_{J,r}^{\et}\circ\rho_{h^1(C),n}^{r+1,r}
 \bigl(\{z,a_1,\ldots,a_r\}_{E/k}\bigr)\\
 &\quad=
 \rho_{C,n}^{r+1,r}\Bigl(
 (p_E)_*\bigl(z\cap a_1\cap\cdots\cap a_r\bigr)
 \Bigr)\\
 &\quad=
 (p_E)_*\rho_{C_E,n}^{r+1,r}
 \bigl(z\cap a_1\cap\cdots\cap a_r\bigr)\\
 &\quad=
 (-1)^r(p_E)_*\iota_{J_E,r}^{\et}\bigl(
 \delta_{J,E}(z)\cup\delta_E(a_1)\cup\cdots\cup\delta_E(a_r)
 \bigr)\\
 &\quad\stackrel{\eqref{eq:trace}}{=}
 (-1)^r\iota_{J,r}^{\et}\Cor_{E/k}\bigl(
 \delta_{J,E}(z)\cup\delta_E(a_1)\cup\cdots\cup\delta_E(a_r)
 \bigr)\\
 &\quad\stackrel{\eqref{eq:GS}}{=}
 (-1)^r\iota_{J,r}^{\et}\circ S_n^{J,r}
 \bigl(\{z,a_1,\ldots,a_r\}_{E/k}\bigr).
\end{align*}
Since such symbols generate $K_r(k;J,\Gm)$ and
$\iota_{J,r}^{\et}$ is injective, we obtain
\[
 \rho_{h^1(C),n}^{r+1,r}=(-1)^rS_n^{J,r}.
\]
Since $\rho_{h^1(C),n}^{r+1,r}$ is injective, the Galois symbol map
$S_n^{J,r}$ is injective.
\end{proof}

\begin{rem}[\text{\cite[Lemma~3.2]{HS25}, \cite{Hir24}}]\label{prop:div}
We record a stable-range $l$-divisibility result for Somekawa $K$-groups attached to several semi-abelian varieties and copies of $\Gm$.
Let $G_1,\ldots, G_q\ (q\ge 1)$ be semi-abelian varieties over $k$
, and let
$l\neq\Char(k)$ be a prime.  Assume that
$s=\cd_l(k)<\infty$.  If $r\ge s$, then
$K(k;G_1,\ldots,G_q,
 \underbrace{\Gm,\ldots,\Gm}_{r})$
is $l$-divisible.  Consequently, its quotient modulo $l^m$ is zero
for every $m\geq1$, and the corresponding Galois symbol map modulo
$l^m$ is injective.
\end{rem}


\end{document}